\def\N{\mathbb N}
\def\Z{\mathbb Z}
\theoremstyle{plain}
\newtheorem{theorem}{Theorem}
\newtheorem*{lemma}{Lemma}
\theoremstyle{definition}
\theoremstyle{remark}
\begin{document}

\title{Wolstenholme again}
\author{Christian Aebi and Grant Cairns}
\address{Coll\`ege Calvin, Geneva, Switzerland 1211}
\email{christian.aebi@edu.ge.ch}
\address{La Trobe University, Melbourne, Australia 3086}
\email{G.Cairns@latrobe.edu.au}

\maketitle

Thanks to Wolstenholme \cite{WJ},  the following three congruences have been know since 1862, for all primes $p\ge 5$:
\begin{align}
\binom{2p-1}{p-1}&\equiv 1 \pmod{p^3},\label{e3}\\
1+\frac{1}{2}+\frac{1}{3}+\ldots+\frac{1}{p-1} &\equiv 0 \pmod{p^2},\label{e2}\\
1+\frac{1}{2^2}+\frac{1}{3^2}+\ldots+\frac{1}{(p-1)^2}&\equiv 0 \pmod{p}.\label{e1}
\end{align}
Here, of course, $\frac{1}{k}$ means the (multiplicative) inverse of $k$ in the relevant sense: in $\Z_p$,  $\Z_{p^2}$, etc, according to the context.
  More than 125 years later, Gardiner \cite{GA} showed the relation between these equivalences when the degree is pushed one level higher:
\begin{theorem}\label{T:mod4}
If $p\ge 7$ is prime, the following conditions are equivalent:
\begin{enumerate}[{\rm (a)}]
  \item $p$ is a Wolstenholme prime, meaning : $\binom{2p-1}{p-1}\equiv 1 \pmod{p^4}$,
  \item $1+\frac{1}{2}+\frac{1}{3}+\ldots+\frac{1}{p-1} \equiv 0 \pmod{p^3}$,
  \item $1+\frac{1}{2^2}+\frac{1}{3^2}+\ldots+\frac{1}{(p-1)^2} \equiv 0 \pmod{p^2}$,
  \item $p\mid B_{p-3}$ where $B_{k}$ denotes the $k^\text{th}$ Bernoulli number. 
\end{enumerate}
\end{theorem}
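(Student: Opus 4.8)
Write $S_k=\sum_{j=1}^{p-1}j^{-k}$ for the inverse power sums, viewed as elements of $\Z_p$, so that (b) reads $S_1\equiv0\pmod{p^3}$ and (c) reads $S_2\equiv0\pmod{p^2}$; Wolstenholme's classical congruences \eqref{e2} and \eqref{e1} give the coarser facts $S_1\equiv0\pmod{p^2}$ and $S_2\equiv0\pmod p$. My plan is to treat the three elementary statements (a), (b), (c) together via a single reduction modulo $p^4$, and then to bridge to the Bernoulli condition (d) through a classical Glaisher-type congruence. I expect the Bernoulli bridge to be the only genuinely hard input.

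First I would expand
\[
\binom{2p-1}{p-1}=\prod_{j=1}^{p-1}\Bigl(1+\tfrac pj\Bigr)=\sum_{m=0}^{p-1}p^m e_m,
\]
where $e_m=e_m(1,\tfrac12,\dots,\tfrac1{p-1})$ is the $m$-th elementary symmetric function of the inverses. Reducing modulo $p^4$ discards all terms with $m\ge4$, and Newton's identities give $e_1=S_1$, $e_2=\tfrac12(S_1^2-S_2)$, $e_3=\tfrac16(S_1^3-3S_1S_2+2S_3)$. Using $S_1\equiv0\pmod{p^2}$ together with $S_3\equiv0\pmod p$ (valid since $(p-1)\nmid3$ for $p\ge7$), the $e_2$- and $e_3$-contributions collapse and I obtain
\[
\binom{2p-1}{p-1}-1\equiv pS_1-\tfrac{p^2}2S_2\pmod{p^4}.
\]

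The crux of the elementary part is a second relation between $S_1$ and $S_2$. Pairing $j$ with $p-j$ and expanding $\tfrac1{p-j}=-\sum_{n\ge0}p^n j^{-(n+1)}$ $p$-adically yields the exact identity $2S_1=-\sum_{n\ge1}p^nS_{n+1}$, hence $2S_1\equiv-pS_2-p^2S_3-p^3S_4\pmod{p^4}$. The same pairing applied to $S_3$ gives $2S_3\equiv-3pS_4\pmod{p^2}$, and since $(p-1)\nmid4$ for $p\ge7$ one has $S_4\equiv0\pmod p$; therefore $S_3\equiv0\pmod{p^2}$ and both tail terms vanish, leaving the clean relation $2S_1\equiv -pS_2\pmod{p^4}$. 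Feeding this back into the displayed expansion turns it into $\binom{2p-1}{p-1}-1\equiv 2pS_1\pmod{p^4}$. Now (a)$\Leftrightarrow$(b) is immediate, since $2pS_1\equiv0\pmod{p^4}$ iff $S_1\equiv0\pmod{p^3}$, and (b)$\Leftrightarrow$(c) follows by reading $2S_1\equiv-pS_2\pmod{p^4}$ in both directions. The hypothesis $p\ge7$ enters exactly where it must: for $p=5$ one has $S_4\not\equiv0\pmod p$, and the key relation fails.

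It remains to connect this circle to (d), and this is where I expect the real work. I would invoke the classical Glaisher congruence $S_1\equiv-\tfrac13p^2B_{p-3}\pmod{p^3}$; since $B_{p-3}$ is $p$-integral for $p\ge5$ and $\tfrac13$ is a $p$-adic unit, this gives $S_1\equiv0\pmod{p^3}$ iff $B_{p-3}\equiv0\pmod p$, i.e. (b)$\Leftrightarrow$(d). Establishing this congruence is the main obstacle: it needs Faulhaber's formula $\sum_{j=1}^{p-1}j^{n}=\tfrac1{n+1}\sum_i\binom{n+1}{i}B_ip^{n+1-i}$, the von Staudt--Clausen control of Bernoulli denominators (to see which terms survive modulo a given power of $p$), and a passage from the positive power sums to the inverse sum $S_1$. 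Everything else is bookkeeping modulo $p^4$; the Bernoulli identity is the one step that genuinely reaches outside the elementary symmetric-function framework.
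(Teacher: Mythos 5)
Your argument is correct, and it is worth noting that the paper does not actually prove Theorem \ref{T:mod4} (it cites Gardiner), so the natural comparison is with its proof of Theorem \ref{T:mod5}, which your proposal parallels without copying. Where the paper eliminates the first power sum by combining the ``upwards'' and ``downwards'' expansions of $\binom{2p-1}{p-1}$ (taking twice (\ref{u}) plus (\ref{d}) and dividing by $3$), you keep only the upwards expansion, convert the elementary symmetric functions via Newton's identities, and then consolidate everything with the exact pairing identity $2S_1=-\sum_{n\ge1}p^nS_{n+1}$; the paper's (b)$\Leftrightarrow$(c) computation is precisely this identity truncated at $n=3$, so in effect you replace one of the paper's two expansions by a second use of the pairing trick. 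You also rederive the needed instances of Leudesdorf's lemma ($S_3\equiv0\pmod{p^2}$ and $S_4\equiv0\pmod p$) from the same pairing argument instead of quoting (\ref{ig2}) and (\ref{ig1}), which makes the elementary part of your proof self-contained, and your closing formula $\binom{2p-1}{p-1}\equiv1+2pS_1\pmod{p^4}$ is equivalent, via $2S_1\equiv-pS_2\pmod{p^4}$, to the mod-$p^4$ reduction of the paper's (\ref{e4}). The one place you stop short is the bridge to (d): you invoke Glaisher's congruence $S_1\equiv-\tfrac13p^2B_{p-3}\pmod{p^3}$ without proving it. That congruence is classical and correct, so this is a deferred citation rather than an error, but if you want the step to be as self-contained as the paper's treatment of Theorem \ref{T:mod5}(d), the cleaner route is the paper's own device: apply Euler's theorem in the form $i^{-2}\equiv i^{\phi(p^2)-2}\pmod{p^2}$ to convert $S_2$ into a genuine power sum and deduce $S_2\equiv pB_{p^2-p-2}\pmod{p^2}$, which already yields (c)$\Leftrightarrow p\mid B_{p^2-p-2}$; passing from $B_{p^2-p-2}$ to $\tfrac23B_{p-3}$ modulo $p$ then needs the Kummer congruences, which is exactly the non-elementary ingredient your sketch correctly isolates as the real work.
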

The term \textit{Wolstenholme prime} was introduced by McIntosh in \cite{Mc}. The only known Wolstenholme primes are 16843 and 2124679; see \cite{MeIS} for another equivalent condition. Nevertheless,  Gardiner's result has been extended one degree further.

\begin{theorem}\label{T:mod5}
If $p\ge 7$ is prime, the following conditions are equivalent:
\begin{enumerate}[{\rm (a)}]
  \item $\binom{2p-1}{p-1}\equiv 1 \pmod{p^5}$,
  \item $1+\frac{1}{2}+\frac{1}{3}+\ldots+\frac{1}{p-1} \equiv 0 \pmod{p^4}$,
  \item $1+\frac{1}{2^2}+\frac{1}{3^2}+\ldots+\frac{1}{(p-1)^2} \equiv 0 \pmod{p^3}$,
  \item $p^2 \mid B_{p^3-p^2-2}$.
\end{enumerate}
\end{theorem}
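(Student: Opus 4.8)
The plan is to funnel all four conditions through the single quantity $H_2\bmod p^3$, where I write $H_m=\sum_{k=1}^{p-1}k^{-m}$; thus (b) reads $H_1\equiv0\pmod{p^4}$ and (c) reads $H_2\equiv0\pmod{p^3}$. Two classical devices drive everything. The first is the expansion
\[
\binom{2p-1}{p-1}=\prod_{k=1}^{p-1}\Bigl(1+\tfrac pk\Bigr),\qquad
\log\binom{2p-1}{p-1}=\sum_{j\ge1}\frac{(-1)^{j-1}}{j}\,p^jH_j ,
\]
whose right-hand side has $p$-adic valuation $\ge3$; exponentiating (equivalently, expanding the product in elementary symmetric functions and applying Newton's identities) yields, after discarding terms of valuation $\ge5$,
\[
\binom{2p-1}{p-1}-1\equiv pH_1-\tfrac{p^2}{2}H_2+\tfrac{p^3}{3}H_3 \pmod{p^5}.
\]
The second device is Euler's theorem $k^{-m}\equiv k^{\phi(p^a)-m}\pmod{p^a}$, which turns each $H_m$ into a genuine power sum $\sum_{k=1}^{p-1}k^{N}$; Faulhaber's formula then expresses this through Bernoulli numbers, and the vanishing of odd-index $B_j$ together with von Staudt--Clausen integrality isolates a single surviving term.

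First I would compute $H_2$ modulo $p^3$ with $N=\phi(p^3)-2=p^3-p^2-2$, exactly the index in (d). In Faulhaber's formula only the top term survives (the next one carries the odd-index $B_{N-1}=0$), giving $H_2\equiv p\,B_{p^3-p^2-2}\pmod{p^3}$; since $N\equiv-2\not\equiv0\pmod{p-1}$ this Bernoulli number is $p$-integral, so (c) $\Leftrightarrow$ (d) follows at once. The crucial auxiliary fact is the extra divisibility $H_3\equiv0\pmod{p^2}$: here $N=\phi(p^2)-3=p^2-p-3$ and Faulhaber leaves $H_3\equiv p\,B_{p^2-p-3}\pmod{p^2}$, and since $p^2-p-3\equiv-3\pmod{p-1}$, Kummer's congruence compares $B_{p^2-p-3}/(p^2-p-3)$ with $B_{p-4}/(p-4)$, which vanishes because $p-4$ is odd and $\ge3$; this is where the hypothesis $p\ge7$ is used. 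The same type of estimate gives $H_4\equiv0\pmod p$.

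It then remains to assemble the equivalences. Pairing $k\leftrightarrow p-k$ gives
\[
H_1=-\tfrac p2\sum_{k=1}^{p-1}k^{-2}\bigl(1-\tfrac pk\bigr)^{-1}
=-\tfrac p2\bigl(H_2+pH_3+p^2H_4+\cdots\bigr);
\]
because $H_3\equiv0\pmod{p^2}$ and $H_4\equiv0\pmod p$, the tail is $\equiv0\pmod{p^3}$, so $H_1\equiv-\tfrac p2H_2\pmod{p^4}$, giving (b) $\Leftrightarrow$ (c). Finally, substituting $pH_1\equiv-\tfrac{p^2}{2}H_2-\tfrac{p^3}{2}H_3\pmod{p^5}$ into the truncated binomial expansion collapses it to $-p^2H_2-\tfrac{p^3}{6}H_3$, and $H_3\equiv0\pmod{p^2}$ removes the last term, leaving $\binom{2p-1}{p-1}-1\equiv -p^2H_2\pmod{p^5}$. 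Hence (a) $\Leftrightarrow$ (c), and all four conditions are equivalent.

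The main obstacle is the unexpected congruence $H_3\equiv0\pmod{p^2}$. A priori $H_3$ is divisible only by $p$, and it is precisely this extra vanishing --- itself a shadow of an odd-index Bernoulli number disappearing under Kummer's congruence --- that makes every $H_3$-contribution drop out of both the pairing identity and the binomial expansion, leaving $H_2\bmod p^3$ in sole control. Beyond that, the work is bookkeeping: tracking $p$-adic valuations carefully enough to certify which Newton (or logarithmic) terms survive modulo $p^5$.
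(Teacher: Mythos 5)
Your proof is correct, and it funnels everything through the same two pivotal congruences as the paper, namely $\binom{2p-1}{p-1}\equiv 1-p^2\sum_i i^{-2}\pmod{p^5}$ and $\sum_i i^{-2}\equiv pB_{p^3-p^2-2}\pmod{p^3}$, but it reaches the first of these by a genuinely different route. Where the paper eliminates the elementary symmetric sums $\sum 1/(ij)$, $\sum 1/(ijk)$, $\sum 1/(ijkl)$ by playing the ``downwards'' expansion $\prod(1-2p/i)$ against the ``upwards'' expansion $\prod(1+p/i)$ and then invoking Wolstenholme and Leudesdorf term by term, you take the $p$-adic logarithm of the single product (equivalently, Newton's identities), landing directly on $\binom{2p-1}{p-1}-1\equiv pH_1-\tfrac{p^2}{2}H_2+\tfrac{p^3}{3}H_3\pmod{p^5}$; this is arguably cleaner and closer in spirit to Helou--Terjanian, at the cost of having to justify the truncation of $\exp(\log(\cdot))$ --- your observation that the logarithm has valuation at least $3$ (which already uses Wolstenholme's $H_1\equiv0\pmod{p^2}$), or the fallback to Newton's identities, does settle this. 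Your (b) $\Leftrightarrow$ (c) is the paper's pairing argument repackaged as a geometric series instead of an iterated telescoping, and your (c) $\Leftrightarrow$ (d) via Euler's theorem and Faulhaber is identical to the paper's, with the welcome extra care about von Staudt--Clausen integrality. The one place you work harder than necessary is the auxiliary congruence $H_3\equiv0\pmod{p^2}$ (which the paper simply quotes from Leudesdorf): having reduced it to $H_3\equiv pB_{p^2-p-3}\pmod{p^2}$, you do not need Kummer's congruence (which is in any case usually stated only for even indices), since $p^2-p-3$ is odd and at least $3$, so $B_{p^2-p-3}=0$ outright. None of these points is a gap; the argument stands as written.
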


The above result is implicitly contained in Helou and Terjanian's 2008 paper \cite{HT}, but somewhat scattered amongst a raft of other, often more substantial results.  We will say more on this at the end of this note. Our main goal here is to highlight the result itself, and to provide a unified, elementary and direct proof. One basic classical result we  use freely throughout this note was proved by Leudesdorf \cite{Leud}; see also \cite[Chap.~VIII.8.7]{HW} and \cite{Ge}:

\begin{lemma}
If $p\ge 7$ is prime and $k\in \N$ such that $2k<p-1$, then
\begin{align}
\sum_{1 \le i \le p-1}\frac{1}{i^{2k-1}}\equiv 0 \pmod{p^2},\ \text{and}\label{ig2}
\\
\sum_{1 \le i \le p-1}\frac{1}{i^{2k}}\equiv 0 \pmod{p}.\label{ig1}
\end{align}
\end{lemma}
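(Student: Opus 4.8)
The plan is to treat the two congruences in turn, deriving the easier mod-$p$ statement \eqref{ig1} first and then bootstrapping it up to the mod-$p^2$ statement \eqref{ig2} by means of the involution $i\mapsto p-i$.

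For \eqref{ig1} I would work in $\Z_p$, where $\tfrac1i$ is the inverse mod $p$. Since $i\mapsto i^{-1}$ permutes $\{1,\dots,p-1\}$, we have $\sum_{i=1}^{p-1} i^{-2k}\equiv\sum_{i=1}^{p-1} i^{2k}\pmod p$, so the problem reduces to the classical power-sum evaluation: $\sum_{i=1}^{p-1}i^{m}\equiv -1\pmod p$ if $(p-1)\mid m$ and $\equiv 0\pmod p$ otherwise, which follows in one line by summing a geometric series over a primitive root. As $0<2k<p-1$ forces $(p-1)\nmid 2k$, this yields $\sum_{i=1}^{p-1} i^{-2k}\equiv 0\pmod p$, exactly \eqref{ig1}.

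For \eqref{ig2} I would exploit the symmetry $i\leftrightarrow p-i$ modulo $p^2$. Writing $m=2k-1$ and expanding $(p-i)^{-m}=(-i)^{-m}(1-p/i)^{-m}\equiv(-1)^{m}i^{-m}\bigl(1+mp/i\bigr)\pmod{p^2}$, the odd parity of $m$ makes the leading terms cancel in pairs, leaving
$$\frac{1}{i^{2k-1}}+\frac{1}{(p-i)^{2k-1}}\equiv -(2k-1)\,p\,\frac{1}{i^{2k}}\pmod{p^2}.$$
Summing over $i=1,\dots,\tfrac{p-1}2$ (which pairs each residue with its complement exactly once) then gives $\sum_{i=1}^{p-1}i^{-(2k-1)}\equiv -(2k-1)\,p\sum_{i=1}^{(p-1)/2}i^{-2k}\pmod{p^2}$.

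It remains to show the half-sum $H:=\sum_{i=1}^{(p-1)/2}i^{-2k}$ vanishes mod $p$; this is the step I expect to carry the weight, though it falls out cleanly from \eqref{ig1}. Applying $i\mapsto p-i$ to the complementary block gives $\sum_{i=(p+1)/2}^{p-1}i^{-2k}\equiv H\pmod p$, so the full sum of \eqref{ig1} is $\equiv 2H\pmod p$; combined with \eqref{ig1} and the fact that $p$ is odd, this forces $H\equiv 0\pmod p$. Since $H$ enters the expression for \eqref{ig2} multiplied by $p$, only its residue mod $p$ matters — and there the distinction between inverses mod $p$ and mod $p^2$ is immaterial — so the right-hand side is $\equiv 0\pmod{p^2}$, completing the argument.
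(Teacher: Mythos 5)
Your proof is correct, but note that the paper itself offers no proof of this lemma to compare against: it is quoted as a classical result of Leudesdorf, with references to Hardy--Wright and to Gessel. Judged on its own, your argument is the standard one and is sound. The reduction of (\ref{ig1}) to the power sum $\sum_{i=1}^{p-1} i^{2k}$ via the permutation $i\mapsto i^{-1}$ of the nonzero residues, followed by the primitive-root evaluation and the observation that $0<2k<p-1$ rules out $(p-1)\mid 2k$, is exactly right. The bootstrap to (\ref{ig2}) via the involution $i\mapsto p-i$ and the expansion $(p-i)^{-(2k-1)}\equiv -\,i^{-(2k-1)}\bigl(1+(2k-1)p/i\bigr)\pmod{p^2}$ is also correct, and it is in fact the same device the authors use later in their proof that (b) is equivalent to (c) in Theorem \ref{T:mod5}, so your lemma proof fits the spirit of the paper well. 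One small streamlining: you can skip the half-sum $H$ entirely by summing your pair identity over all $i=1,\dots,p-1$ (each pair is then counted twice), giving $2\sum_i i^{-(2k-1)}\equiv -(2k-1)p\sum_i i^{-2k}\pmod{p^2}$, after which (\ref{ig1}) applied to the full sum finishes the job; your detour through $H\equiv 0\pmod p$ is correct but unnecessary.
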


\begin{proof}[Proof of Theorem \ref{T:mod5}] (a) $\Leftrightarrow$ (c).
We first develop the binomial coefficient $\binom{2p-1}{p-1}$ ``downwards'':
\begin{align*}
 \binom{2p-1}{p-1}   &= \frac{(2p-1)(2p-2)\ldots(2p-(p-1))}{1\cdot 2 \ldots (p-1)}  \\
   & = (-1)^{p-1}\left(1-\frac{2p}{1}\right)\left(1-\frac{2p}{2}\right)\ldots \left(1-\frac{2p}{p-1}\right).
   \end{align*}
Expanding  the last line in $\Z_{p^5}$ gives us:
\begin{equation}\label{d}
1 - 2p\sum_{ i }\frac{1}{i} + 4p^2\sum_{i < j}\frac{1}{ij} - 8p^3\sum_{i<j<k}\frac{1}{ijk} + 16p^4\sum_{i<j<k<l }\frac{1}{ijkl},
\end{equation}
where here and below, unless otherwise stated, the summations are over variables in the range $1,...,p-1$. 
Next we work ``upwards'':
\begin{align*}
 \binom{2p-1}{p-1}   &= \frac{(1+p)(2+p)\ldots((p-1)+p)}{1\cdot 2 \ldots (p-1)}  \\
   & = \left(1+\frac{p}{1}\right)\left(1+\frac{p}{2}\right)\ldots \left(1+\frac{p}{p-1}\right)
\end{align*}
to obtain in $\Z_{p^5}$ :
\begin{equation}\label{u}
1 + p\sum_{i }\frac{1}{i} + p^2\sum_{ i < j }\frac{1}{ij} + p^3\sum_{i<j<k}\frac{1}{ijk} + p^4\sum_{ i<j<k<l }\frac{1}{ijkl}.
\end{equation}
Multiply equation (\ref{u}) by 2 and add the product to equation (\ref{d}) in order to eliminate the $p$ term. Then divide both members by 3 to get:
\begin{equation*}
\binom{2p-1}{p-1}\equiv1 + 2p^2\sum_{ i < j}\frac{1}{ij} - 2p^3\sum_{i<j<k}\frac{1}{ijk} + 6p^4\sum_{ i<j<k<l }\frac{1}{ijkl}.
\end{equation*}
Concerning the last summand, notice that multiplying all the indices $i,j,k,l$ by 2 leaves the sum $\sum_{ i<j<k<l }\frac{1}{ijkl}$  fixed in $\Z_p$. Therefore,  since $2^4\not\equiv 0\pmod p$, this sum is equivalent to $0\pmod{p}$. The second summand may be transformed by using $2\sum \frac{1}{ij}=\left(\sum \frac{1}{i}\right)^2-\sum \frac{1}{i^2}$. After substitution and application of (\ref{e2}) to the square term we get:
\begin{equation*}
\binom{2p-1}{p-1}\equiv1 -p^2\sum_{i  }\frac{1}{i^2} - 2p^3\sum_{ i<j<k}\frac{1}{ijk} \pmod{p^5}.
\end{equation*}
Finally, concerning the last summand, notice that we have:
\begin{equation*}
6\sum_{ i <j <k }\frac{1}{ijk}= \left(\sum_{i }\frac{1}{i}\right)^3-3\left(\sum_{ i }\frac{1}{i^2}\right)\left(\sum_{ j}\frac{1}{j}\right) + 2\sum_{ i }\frac{1}{i^3}
\end{equation*}
  which is equivalent to $0 \pmod{p^2}$ by using  the equivalences (\ref{e2}), (\ref{e1}) and (\ref{ig2}). Therefore we have proved
\begin{equation}
\label{e4}
\binom{2p-1}{p-1}\equiv1 -p^2\sum_{i }\frac{1}{i^2} \pmod{p^5},
\end{equation}
which figures in \cite[p.~385]{Mc}.

(b) $\Leftrightarrow$ (c). By using elementary identities we obtain:
\begin{align*}
2 \sum_{i}\frac{1}{i}    &=  \sum_{i}\left(\frac{1}{p-i}+\frac{1}{i} \right) =  p\sum_{i}\left(\frac{1}{(p-i)i}+\frac{1}{i^2} -\frac{1}{i^2} \right) \\
 &=   -p\sum_{i}\frac{1}{i^2} +p^2\sum_{i}\frac{1}{(p-i)i^2} \\
   &= -p\sum_{i}\frac{1}{i^2} +  p^2\sum_{i}\left(\frac{1}{(p-i)i^2}+\frac{1}{i^3}\right)-p^2\sum_{i}\frac{1}{i^3} \\ 
   &= -p\sum_{i}\frac{1}{i^2} - p^2\sum_{i}\frac{1}{i^3} +  p^3\sum_{i}\frac{1}{(p-i)i^3},
\end{align*}
from which we easily conclude by using (\ref{ig2}) on the middle summand  and (\ref{ig1}) on the last summand as $\sum_{i}\frac{1}{(p-i)i^3}\equiv \sum_{i}\frac{-1}{i^4}\pmod p$.

Equation (\ref{e4}) $\Leftrightarrow$ (d).
This last equivalence requires basic knowledge of Bernoulli numbers we recall from \cite{IR}. If 
\begin{equation}
\label{Sm}
S_m(p):=\sum_{i=1}^{p-1}i^m
\end{equation}
then from \cite[pg. 230, Theorem 1]{IR}, 
\begin{equation}
\label{ SB}
S_m(p)=\sum_{i=1}^{m+1}\frac{1}{i}\binom{m}{i-1}p^iB_{m+1-i}.
\end{equation}
Importantly for us, the $B_i$ are 0 for odd integers $i>1$. Our general method is to transform the summand in (\ref{e4}) into an equation of the form (\ref{Sm}) by applying Euler's theorem, 
\[
i^{-2}\equiv i^{\phi(p^3)-2} \pmod{p^3},\] where $\phi$ is Euler's totient function. Working in $\Z_{p^3}$ and letting $m:=p^3-p^2-2$ we get, since  odd indexed Bernoulli numbers vanish,
\begin{align*}
\sum_{i=1}^{p-1}i^{-2}\equiv & \sum_{i=1}^{p-1} i^{m} =S_m(p) = \sum_{i=1}^{m+1}\frac{1}{i}\binom{m}{i-1}p^iB_{m+1-i}  \\
& \equiv pB_{p^3-p^2-2}\pmod{p^3}
\end{align*}
which replaced in (\ref{e4}) gives what is wanted:
\[
\binom{2p-1}{p-1}\equiv1 -p^3B_{p^3-p^2-2} \pmod{p^5}.
\]
\end{proof}

McIntosh commented that there is probably only a finite number of primes verifying Theorem \ref{T:mod5}(a) and conjectured that there are none \cite[bottom p.~387]{Mc}. One natural question is: Can Theorem \ref{T:mod5} be extended to the next degree? According to \cite[Lemma 3 and Cor.~5(1)]{HT} it seems the answer is no, since they obtain the following results:
\begin{align*}
  \binom{2p-1}{p-1}  &\equiv 1 - p^3B_{p^3-p^2-2}+\frac{1}{3}p^5B_{p-3}-\frac{6}{5}p^5B_{p-5}  \pmod{p^6}, \\
    \sum_{i=1}^{p-1}\frac{1}{i} & \equiv -\frac{p^2}{2}B_{p^3-p^2-2}+\frac{p^4}{6}B_{p-3}-\frac{p^4}{5}B_{p-5}  \pmod{p^5}, \\
    \sum_{i=1}^{p-1}\frac{1}{i^2} & \equiv pB_{p^3-p^2-2}-\frac{p^3}{3}B_{p-3}+\frac{4}{5}p^3B_{p-5}  \pmod{p^4},
\end{align*} 
and so the last term in $B_{p-5}$ does not coincide in any pair of expressions. Notice that reducing these three equivalences modulo $p^5,p^4,p^3$ respectively establishes Theorem \ref{T:mod5}. It is in this sense that Theorem \ref{T:mod5} is contained in \cite{HT}. A formula for  $\binom{2p-1}{p-1} $ modulo $p^7$ is given in \cite{MeRM}. For related results see \cite{Me}.

\providecommand{\bysame}{\leavevmode\hbox to3em{\hrulefill}\thinspace}
\providecommand{\MR}{\relax\ifhmode\unskip\space\fi MR }
\providecommand{\MRhref}[2]{%
  \href{http://www.ams.org/mathscinet-getitem?mr=#1}{#2}
}
\providecommand{\href}[2]{#2}

\end{document}